\newcommand{\N}{\mathbb{N}}
\newcommand{\R}{\mathbb{R}}
\newcommand{\F}{\mathcal{F}}
\newcommand{\diam}{\text{diam}}
\theoremstyle{plain}
\newtheorem{tw}{Theorem}[section]
\newtheorem{lem}[tw]{Lemma}
\newtheorem{obs}[tw]{Proposition}
\newtheorem{problem}[tw]{Problem}
\theoremstyle{definition}
\newtheorem{df}[tw]{Definition}
\newtheorem{prz}[tw]{Example}
\theoremstyle{remark}
\newtheorem{rem}[tw]{Remark}
\begin{document}

\begin{abstract}
    Main aim of this article is to prove that for any continuous function $f \colon X \to X$, where $X$ is metrizable (or, more generally, for any family $\F$ of such functions, with one extra condition), there exists a compatible metric $d$ on $X$ such that the nth iteration of $f$ (more generally, composition of any $n$ functions from $\F$) is Lipschitz with constant $a_n$ where $(a_n)_{n=1}^{\infty}$ is an arbitrarily fixed sequence of real numbers such that $1 < a_n$ and $\lim\limits_{n\to+\infty}a_n = +\infty$. In particular, any dynamical system can be remetrized in order to significantly control the distance between points by their initial distance.
\end{abstract}
\title[Remetrizing dynamical systems]{Remetrizing dynamical systems to control distances of points in time}
\author[K.\ Go\l{}\k{e}biowski]{Krzysztof Go\l{}\k{e}biowski}
\date{}

\address{ Wydzia\l{} Matematyki i~Informatyki\\Uniwersytet Jagiello\'{n}ski\\ ul.\ \L{}ojasiewicza 6\\30-348 Krak\'{o}w\\Poland}
\email{krzysztof.golebiowski@student.uj.edu.pl}

\subjclass{54E35; 54E40, 37B99}
\keywords{remetrization; equicontinuous family; Lipschitz map; orbits in a dynamical system}

\maketitle


\section{Introduction}

One of the important parts of the theory of dynamical systems is to control the distance between points as they move in time. It concerns notions such as \textit{expansivity}, \textit{proximality} or \textit{distality} (see Chapter IV, Section 2 in \cite{vries}). It may be useful to consider the distance between points with respect to their initial distance which leads to natural questions such as which families may be made Lipschitz after remetrization, Lipschitz with a priori given constants, or Lipschitz for next iterations of members of these families. An example could be a dynamical system consisting of one function $f$ and we could be interested in how small the Lipschitz constants of $f^n$ could be. The main result of this paper is regarding an arbitrary family with an additional property (finite equicontinuity; see Definition \ref{skladanie} in Section 2). In order to illustrate the main theorem, we present a special case for a dynamical system consisting of a single function:
\begin{tw}
    For a continuous function $f : X \to X$ on a metrizable space $X$ there exists a compatible metric $d$ such that $f^n$ is Lipschitz (w.r.t. $d$) and has Lipschitz constant not greater than $\log(n+2)$ for any $n > 0$.
\end{tw}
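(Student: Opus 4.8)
The plan is to build the metric directly from the orbit structure using a \emph{supremum} rather than a weighted sum. First I would fix any compatible metric and replace it by $\rho := \min(\rho,1)$, so that $\rho$ is a bounded compatible metric with $\rho \le 1$. Next, fix a strictly increasing sequence $(b_n)_{n\ge 0}$ of positive reals with $b_n \to \infty$ that grows only logarithmically — concretely $b_n := \log(n+c)$ for a suitably large constant $c > 1$ (for instance $c = e^2$, so $b_0 = 2$) — and define
$$ d(x,y) := \sup_{n \ge 0} \frac{\rho(f^n x, f^n y)}{b_n}. $$

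The first step is to check that $d$ is a compatible metric. It is a supremum of the pseudometrics $(x,y) \mapsto \rho(f^n x, f^n y)/b_n$, hence symmetric and subadditive; it is bounded by $1/b_0$ since $\rho \le 1$ and $(b_n)$ is increasing; and $d(x,y) = 0$ forces $\rho(x,y) = \rho(f^0 x, f^0 y) = 0$, so $x = y$. Compatibility follows from two observations: the bound $\rho(x,y) \le b_0\, d(x,y)$ shows that $d$-convergence implies $\rho$-convergence, while conversely, if $x_k \to x$ then for all large $n$ the term $\rho(f^n x_k, f^n x)/b_n \le 1/b_n$ is uniformly small because $b_n \to \infty$, and for the finitely many remaining indices one uses continuity of $f^n$; hence $d(x_k,x) \to 0$.

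The heart of the argument is the Lipschitz estimate. Reindexing by $m = n+k$ gives $d(f^k x, f^k y) = \sup_{m \ge k} \rho(f^m x, f^m y)/b_{m-k} = \sup_{m \ge k} \frac{b_m}{b_{m-k}}\cdot\frac{\rho(f^m x, f^m y)}{b_m} \le \bigl(\sup_{m \ge k} b_m/b_{m-k}\bigr)\, d(x,y)$, so it remains to show $\sup_{m \ge k} b_m/b_{m-k} \le \log(k+2)$. With $b_n = \log(n+c)$ one shows that $t \mapsto \log(t+k+c)/\log(t+c)$ is decreasing on $[0,\infty)$ — after clearing denominators this reduces to the monotonicity of $u \mapsto u\log u$ on $[1,\infty)$ — so the supremum is attained at $m = k$ and equals $b_k/b_0 = \log(k+c)/\log c$. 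Finally one verifies the elementary inequality $\log(k+c)/\log c \le \log(k+2)$ for all integers $k \ge 1$: it holds at $k = 1$ by direct computation, and the difference of the two sides is nondecreasing in $k$ because the derivative $1/\bigl((k+c)\log c\bigr)$ of the left-hand side is dominated by the derivative $1/(k+2)$ of the right-hand side once $c$ is large enough. This yields $\mathrm{Lip}_d(f^k) \le \log(k+2)$, in particular $f^k$ is Lipschitz, for every $k \ge 1$.

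The main obstacle is conceptual rather than computational: the naive attempt $d(x,y) = \sum_n c_n\,\rho(f^n x, f^n y)$ cannot work, because making $f^k$ Lipschitz with constant $\log(k+2)$ would already force — via the $m=k$ term — $c_k \gtrsim c_0/\log(k+2)$, and such a sequence is not summable. Passing from a sum to a supremum dissolves this tension: the ``first'' orbit point no longer has to carry a summable weight, yet it still controls the topology. After that change, the only genuine work is the two elementary monotonicity facts used to see that the tail supremum $\sup_{m \ge k} b_m/b_{m-k}$ is governed by its value at $m = k$.
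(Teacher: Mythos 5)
Your proof is correct. Its core construction --- the bounded base metric $\rho$, the weighted-supremum metric $d(x,y)=\sup_{n\ge 0}\rho(f^nx,f^ny)/b_n$, the compatibility argument splitting the supremum into a tail controlled by $b_n\to\infty$ and finitely many indices controlled by continuity, and the reindexing that factors out $\sup_{m\ge k}b_m/b_{m-k}$ --- is exactly the metric $\hat d$ from the paper's proof of the general Theorem \ref{glowne_twierdzenie}, of which this statement is the special case $\F=\{f\}$, $\omega_n(t)=t\log(n+2)$. Where you genuinely diverge is in how the weights are tamed. The paper first replaces the target sequence by a \emph{submultiplicative} minorant $b_n$ (Lemma \ref{ciag}, a step sequence of powers of a constant), after which $b_m/b_{m-k}\le b_k$ is automatic; you instead take the explicit weights $b_n=\log(n+e^2)$ and verify by hand, via the monotonicity of $u\mapsto u\log u$, that $\sup_{m\ge k}b_m/b_{m-k}=\log(k+e^2)/2\le\log(k+2)$. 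Your route is self-contained (no auxiliary lemma) and even gives a slightly better constant, but it is wedded to the specific formula: for an arbitrary admissible sequence of Lipschitz constants $a_k\to\infty$, which the general theorem also delivers, one really does need something like the submultiplicative-minorant lemma. You also correctly dispense with the paper's truncation of the metric to small diameter, which is only needed there because the general theorem must accommodate bounded moduli of continuity, and your closing remark explaining why a weighted \emph{sum} cannot work is a sound observation, though not logically required.
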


\begin{tw}
    For any homeomorphism $h$ on a metrizable space $X$ there exists a compatible metric $d$ such that for any $k \in \mathbb{Z} \setminus \{0\}$ the homeomorphism $h^k$ satisfies the Lipschitz condition with a constant not exceeding $\log(|k|+2)$.
\end{tw}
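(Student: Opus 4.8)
The plan is to obtain this statement as a direct consequence of the general remetrization theorem, applied to the finite family $\mathcal{F} = \{h, h^{-1}\}$ and the sequence $a_k = \log(k+2)$. First I would record that $(a_k)_{k=1}^{\infty}$ is admissible for the theorem: with $\log$ the natural logarithm, $a_1 = \log 3 > 1$, the sequence is strictly increasing, and $\lim_{k \to +\infty}\log(k+2) = +\infty$, so the hypotheses $1 < a_k$ and $a_k \to +\infty$ are satisfied.

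Second, I would check that $\mathcal{F} = \{h, h^{-1}\}$ is finitely equicontinuous in the sense of Definition \ref{skladanie}. The point here is that $\mathcal{F}$ is \emph{finite} and consists of continuous maps (both $h$ and $h^{-1}$ are continuous, since $h$ is a homeomorphism); hence, for each fixed $n$, the collection of all compositions of $n$ members of $\mathcal{F}$ is again a finite family of continuous self-maps of $X$, and for such a family the equicontinuity requirement of Definition \ref{skladanie} holds automatically — it holds already for a single continuous map, as the single-function case stated above shows, and the same reasoning covers finitely many maps and their finitely many bounded-length compositions. So $\mathcal{F}$ meets the hypothesis of the main theorem.

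Third, I would invoke the main theorem for $\mathcal{F}$ and $(a_k)$: it yields a compatible metric $d$ on $X$ such that every composition $f_{i_1} \circ \cdots \circ f_{i_n}$ of $n$ (not necessarily distinct) functions from $\mathcal{F}$ is $d$-Lipschitz with constant at most $a_n = \log(n+2)$. To finish, fix $k \in \mathbb{Z} \setminus \{0\}$ and set $m = |k| \ge 1$. If $k > 0$ then $h^{k}$ is the composition of $m$ copies of $h$; if $k < 0$ then $h^{k} = (h^{-1})^{m}$ is the composition of $m$ copies of $h^{-1}$. In either case $h^{k}$ is a composition of $m$ functions from $\mathcal{F}$, hence $d$-Lipschitz with constant at most $a_m = \log(m+2) = \log(|k|+2)$, which is the claim.

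I expect essentially all of the genuine work to lie inside the general theorem (the actual construction of the metric), so that within this deduction there is very little to do. The only points deserving a moment's attention are the verification that $\{h, h^{-1}\}$ satisfies Definition \ref{skladanie}, and the observation that the main theorem allows repetitions inside the compositions — this is precisely what legitimizes counting $h^{k}$ as a composition of $|k|$ members of $\mathcal{F}$.
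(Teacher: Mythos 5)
Your proposal is correct and follows essentially the same route as the paper, which obtains this statement by applying Theorem \ref{glowne_twierdzenie} with $\omega_n(t) = t\log(n+2)$ to the finite (hence finitely equicontinuous, as noted in the paper's remark on finite families of continuous functions) family $\{h, h^{-1}\}$. The details you single out --- admissibility of the sequence, finite equicontinuity of $\{h,h^{-1}\}$, and the fact that $\F^n$ allows repeated factors so that $h^k \in \F^{|k|}$ --- are exactly the right ones and are all handled correctly.
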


\begin{tw}
    Let $G$ be a finitely generated transformation group of homeomorphisms of a metrizable space $X$ and let $h_1, \ldots, h_p$ be generators of $G$. Denote by $m$ the length metric on $G$ induced by the generating set $S = \{h_1,...,h_p,id,h_1^{-1}, \ldots, h_p^{-1}\}$; that is, for each $g \in G \setminus \{id\}$, $m(g)$ is the least natural number $n$ such that $g$ can be expressed as the product of $n$ elements of $S$. Then there exists a compatible metric $d$ on $X$ such that each map $g$ from $G \setminus \{id\}$ satisfies the Lipschitz condition (w.r.t. d) with a constant not greater than $\log(m(g)+2)$.
\end{tw}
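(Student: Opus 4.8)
The plan is to deduce Theorem 1.3 from the general remetrization theorem of Section~2 (the one for finitely equicontinuous families announced in the abstract), in exactly the same spirit in which Theorems~1.1 and~1.2 are illustrations of it. I would take as my family
$\mathcal{F} := S \setminus \{id\} = \{h_1, \ldots, h_p, h_1^{-1}, \ldots, h_p^{-1}\}$.
Since $\mathcal{F}$ is a \emph{finite} family of homeomorphisms of $X$, it is finitely equicontinuous in the sense of Definition~\ref{skladanie} (a finite family of continuous maps is equicontinuous at every point, and any finite composition of its members is again such a map) — this is precisely the observation that makes Theorems~1.1 and~1.2 instances of the main theorem, so it should already be available. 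Hence the main theorem applies to $\mathcal{F}$.

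Next I would feed the main theorem the sequence $a_k := \log(k+2)$, $k \ge 1$. This sequence is admissible: $a_k \ge a_1 = \log 3 > \log e = 1$, it is strictly increasing, and $a_k \to +\infty$. The theorem then yields a compatible metric $d$ on $X$ such that every composition of $k$ members of $\mathcal{F}$ is Lipschitz with respect to $d$ with constant at most $a_k = \log(k+2)$.

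The remaining step is the purely combinatorial translation between word length in $S$ and composition length in $\mathcal{F}$. Fix $g \in G \setminus \{id\}$ and put $n := m(g) \ge 1$. By definition $g$ is a product of $n$ elements of $S$; since $g \ne id$, any factor equal to $id$ can be deleted without changing $g$ and with a shorter word, contradicting minimality of $n$. Hence there is an expression $g = s_1 \circ \cdots \circ s_n$ with every $s_i \in \mathcal{F}$, i.e.\ $g$ is a composition of $n$ members of $\mathcal{F}$. Therefore $g$ satisfies the Lipschitz condition (w.r.t.\ $d$) with constant at most $a_n = \log(n+2) = \log(m(g)+2)$, which is the claim. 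As a sanity check, specializing to $p = 1$, $h_1 = h$ recovers Theorem~1.2, since then $m(h^k) \le |k|$ and so $\log(m(h^k)+2) \le \log(|k|+2)$.

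I do not expect a genuine obstacle in this argument: once the main theorem of Section~2 is in hand, Theorem~1.3 is essentially a restatement of it in the language of word metrics on finitely generated transformation groups, and all of the analytic work (the actual construction of $d$) lives in that theorem rather than here. The only two points requiring a word of care are (i) confirming that a finite family meets the finite-equicontinuity hypothesis — which is needed already for Theorems~1.1--1.2 — and (ii) that the hypothesis on the sequence in the main theorem asks for nothing beyond $1 < a_k$ and $a_k \to +\infty$; the choice $a_k = \log(k+2)$ is in any case monotone, so even a monotonicity requirement would cause no difficulty.
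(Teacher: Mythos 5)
Your proposal is correct and follows essentially the same route as the paper, which derives this theorem from Theorem \ref{glowne_twierdzenie} by setting $\omega_n(t) = t\log(n+2)$ and using that a finite family of continuous maps is finitely equicontinuous. Your extra bookkeeping (discarding $id$ factors so that a word of length $m(g)$ lies in $\F^{m(g)}$ with $\F = S\setminus\{id\}$) is exactly the detail the paper leaves implicit.
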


In fact, the Lipschitz constants that appear above may tend to infinity arbitrarily slowly (see Theorem \ref{glowne_twierdzenie}). Hence we see we can control the distance between points with respect to their initial distance with arbitrarily given Lipschitz constants. 

The main result is Theorem \ref{glowne_twierdzenie}, which
deals with the way of controlling the distance between points with respect to initial distance formulated not only for Lipschitz constants but more generally for moduli of continuity (see the next subsection), and theorems stated above. Moreover, we give an interesting example regarding the notion of finite equicontinuity (Example \ref{przykladzik}).

We hope these results will give new tools to work with dynamical systems, in particular with those defined on noncompact spaces.

Basic information about dynamical systems may be found in \cite{furst}, \cite{vries} and \cite{aoki}.

\subsection*{Notation and terminology}

In this article, by $\R_+$ we will denote the set $[0,+\infty)$. $\N$ will denote the set of all nonnegative integers. By $\diam_d(X)$ we will denote the diameter of a space $X$ in metric $d$, that is, $\diam_d(X)=\sup\{d(x,y) : x,y \in X\}$. $X^Y$ denotes the set of all functions from $Y$ into $X$.

A function $\omega: \R_+ \to \R_+$ will be called a \textit{modulus of continuity} if:
\begin{itemize}
    \setlength\itemindent{30pt}
    \item[(MC1)] $\omega(0) = 0 = \lim\limits_{t\to 0^+}\omega(t)$,
    \item[(MC2)] $\omega$ is nondecreasing; that is, $\omega(x) \leq \omega(y)$ whenever $0 \leq x \leq y$,
    \item[(MC3)] the function $\R_+\setminus\{0\} \ni t \mapsto \frac{\omega(t)}{t} \in \R_+$ is monotone decreasing; that is, $\frac{\omega(x)}{x} \geq \frac{\omega(y)}{y}$ whenever $0 \leq x \leq y$,
\end{itemize}

\noindent Note the limit $\lim\limits_{t\to 0^+}\frac{\omega(t)}{t}$ exists in $\R\cup\{+\infty\}$. We will call that limit \textit{the derivative at 0 of $\omega$} and it will be denoted by $\omega'(0)$. It follows that if $\omega'(0) \in \R_+$ then $\omega(t)\leq\omega'(0)t$ for all $t \in \R_+$. Moreover, moduli of continuity are subadditive, that is, $\omega(x+y) \leq \omega(x) + \omega(y)$ for $x,y \in \R_+$. The set of all moduli of continuity will be denoted by $MC$.

The reader might wonder why we take such a definition while e.g. N. Aronszajn and P. Panitchpakdi (see \cite{aron}) define them in a slightly weaker way, namely using only conditions (MC1) and (MC2). The reason is that in our definition, $MC$ is a convex set which is closed under addition, taking pointwise infima and suprema of bounded above sets (by a modulus of continuity). By the last two notions we mean that for two sets $A,B \subset MC$ (such that there exists $\tilde{\omega} \in MC$ for which $\omega < \tilde{\omega}$ whenever $\omega \in B$) the following functions are well defined moduli of continuity: $\inf A(t):=\inf\{\omega(t) : \omega \in A\}$ and $\sup B(t):=\sup\{\omega(t) : \omega \in B\}$. Moreover, any function $\omega$ satisfying conditions (MC1), (MC2) and additionally $\limsup\limits_{t\to+\infty}\frac{\omega(t)}{t} < +\infty$ is bounded from above by a concave function from $MC$ (see \cite{aron} for a proof). Functions satisfying (MC2) and (MC3) which vanish at 0 (so the only difference from our moduli of continuity is the lack of continuity at 0) are known as quasiconcave (see \cite{colin}).

\indent For the reader's convenience we remind a classical notion.
For a topological space $X$, a metric space $(Y,\rho)$ and a family $\F \subset Y^X$ we will say that the family $\F$ is \textit{equicontinuous (with respect to the metric $\rho$)} if for any $x\in X$ and any $\varepsilon > 0$ there exists a neighbourhood $U$ of $x$ such that for any function $f \in \F$ we have $\diam_\rho(f(U)) \leq \varepsilon$. When $X$ is a metric space with metric $d$ then we will say that $\F$ is \textit{uniformly equicontinuous (with respect to the metrics $d$ and $\rho$)} if for any $\varepsilon > 0$ there exists $\delta > 0$ such that for any $x,y \in X$ and $f \in \F$ if $d(x,y) < \delta$ then $\rho(f(x),f(y)) < \varepsilon$. Note that the union of finitely many [uniformly] equicontinuous families is [uniformly] equicontinuous.\\
\indent Let $(X,d)$ and $(Y,\rho)$ be metric spaces, $f:X\to Y$ a function between them and $\omega \in MC$. We will say that $f$ \textit{admits the modulus (of continuity)} $\omega$ if for any $x,y\in X$ the following condition is satisfied:
\begin{equation*}
    \rho(f(x),f(y)) \leq \omega(d(x,y)).
\end{equation*}
In particular, any function admitting a modulus of continuity is uniformly continuous and if $\omega'(0)\in\R_+$ then $f$ is $\omega'(0)$-Lipschitz (by (MC3) we have $\omega(t) \leq \omega'(0)t \ \forall t\in\R_+$). If $\F$ is a family of functions from $X$ to $Y$ then we will say analogously that the family $\F$ admits a modulus $\omega$ if every function from $\F$ admits that modulus. Note that if the family $\F$ admits a modulus of continuity then it is uniformly equicontinuous.\\
\indent For a set $X$, a family of functions $\F \subset X^X$ and a natural number $n > 0$, $\F^n$ will denote the set of all possible compositions of $n$ functions from the family $\F$; $\F^0:=\{id_X\}$. Moreover, $f^n$ will denote \textit{the nth iteration} of the function $f$, that is, $f^n:=f \circ \ldots \circ f$, where $f$ appears $n$ times.\\
\indent The proof of the following lemma is left to the reader.

\begin{lem}\label{rownosc a modul}
    Let $X$ be a topological space, $d$ and $\rho$ be equivalent metrics on a space $Y$ and $\F \subset Y^X$ be equicontinuous with respect to $\rho$. If $id_Y : (Y,\rho) \to (Y,d)$ is uniformly continuous then $\F$ is equicontinuous with respect to $d$.\\
    In particular, the family $\F$ is equicontinuous with respect to $\omega\circ\rho$ for any $\omega\in MC$ such that $\omega(x)\neq 0 \ \forall \ x \neq 0$.
\end{lem}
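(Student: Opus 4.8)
The plan is simply to unwind the definitions; no clever idea is needed. For the first assertion I would fix a point $x \in X$ and an $\varepsilon > 0$, and first invoke the uniform continuity of $id_Y : (Y,\rho) \to (Y,d)$ to obtain a $\delta > 0$ such that $\rho(y_1,y_2) < \delta$ implies $d(y_1,y_2) < \varepsilon$ for all $y_1,y_2 \in Y$. Then I would apply the $\rho$-equicontinuity of $\F$ at $x$ to the number $\delta/2$ (in place of $\varepsilon$), obtaining a neighbourhood $U$ of $x$ with $\diam_\rho(f(U)) \le \delta/2$ for every $f \in \F$. For such an $f$ and any two points of $f(U)$, their $\rho$-distance is $< \delta$, hence their $d$-distance is $< \varepsilon$; taking the supremum yields $\diam_d(f(U)) \le \varepsilon$, which is precisely $d$-equicontinuity at $x$.

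For the ``in particular'' clause the point is that $\omega\circ\rho$ is an admissible metric on $Y$ to which the first part applies. First I would check that $\omega\circ\rho$ is a metric: symmetry is clear, the triangle inequality follows from subadditivity and monotonicity of $\omega$, and positivity on distinct points follows because a non-zero $\omega \in MC$ satisfies $\omega(t) > 0$ for every $t > 0$ --- indeed, if $\omega(t_0) > 0$ for some $t_0>0$, then (MC3) gives $\omega(t) \ge (t/t_0)\,\omega(t_0) > 0$ for $0 < t \le t_0$, and (MC2) handles $t > t_0$. Next I would verify that $\rho$ and $\omega\circ\rho$ are equivalent: the map $id_Y : (Y,\rho) \to (Y,\omega\circ\rho)$ is (uniformly) continuous by (MC1), since $\omega(t) \to 0$ as $t \to 0^+$ and $\omega(0)=0$; and $id_Y : (Y,\omega\circ\rho) \to (Y,\rho)$ is continuous because, given $\varepsilon > 0$, the number $\delta := \omega(\varepsilon) > 0$ works, as $\omega(\rho(y_1,y_2)) < \delta$ forces $\rho(y_1,y_2) < \varepsilon$ by (MC2). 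Finally, the very estimate that gave continuity of $id_Y : (Y,\rho) \to (Y,\omega\circ\rho)$ in fact gives \emph{uniform} continuity (the choice of $\delta$ there did not depend on the base point), so the hypotheses of the first part are satisfied with $d := \omega\circ\rho$, and the conclusion follows.

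I do not anticipate any real obstacle here; the only mild subtlety worth flagging is that the hypothesis ``$\omega$ non-zero'' is used precisely to make $\omega\circ\rho$ separate points (otherwise one obtains only a pseudometric and the phrase ``equivalent metric'' would be vacuous), and that it is conditions (MC2) and (MC3) that upgrade a single value $\omega(t_0)>0$ to strict positivity of $\omega$ on all of $(0,+\infty)$.
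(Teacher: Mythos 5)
Your proof is correct and complete; the paper explicitly leaves this lemma to the reader, and your argument is exactly the intended routine unwinding of definitions (including the correct use of the non-vanishing of $\omega$ via (MC2)--(MC3) to make $\omega\circ\rho$ a genuine metric, and of (MC1) for the uniform continuity needed to invoke the first part). Nothing to add.
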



\section{Main result}

Theorem \ref{glowne_twierdzenie} has its origin in the following propositions.

\begin{obs}
    For a metrizable space $X$, metric space $(Y,\rho)$ and a family $\F$ of functions from $X$ to $Y$ the following conditions are equivalent:
    \begin{enumerate}
        \item $\F$ is equicontinuous with respect to $\rho$;
        \item there exists a compatible metric $d$ on $X$ with respect to which all functions from $\F$ are 1-Lipschitz.
    \end{enumerate}
\end{obs}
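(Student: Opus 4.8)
The plan is to prove the two implications separately; $(2)\Rightarrow(1)$ is soft, whereas $(1)\Rightarrow(2)$ carries the weight and amounts to writing down one explicit metric. For $(2)\Rightarrow(1)$, suppose $d$ is a compatible metric on $X$ with $\rho(f(x),f(y))\le d(x,y)$ for all $f\in\F$ and $x,y\in X$. Fix $x\in X$ and $\varepsilon>0$. The ball $U:=B_d(x,\varepsilon/2)$ is a neighbourhood of $x$ (here we use compatibility of $d$), and for $y,y'\in U$ and any $f\in\F$ we get $\rho(f(y),f(y'))\le d(y,y')\le d(y,x)+d(x,y')<\varepsilon$, so $\diam_\rho f(U)\le\varepsilon$. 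Hence $\F$ is equicontinuous, a property that involves the domain only through its topology and is thus insensitive to which compatible metric on $X$ one names.

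For $(1)\Rightarrow(2)$, fix a compatible metric $d_0$ on $X$ and set
\[
\hat d(x,y):=\sup_{f\in\F}\rho(f(x),f(y)),\qquad d:=d_0+\hat d .
\]
I would check, in order: (i) $\hat d$ is a pseudometric — symmetry and vanishing on the diagonal are immediate, and the triangle inequality follows by applying that of $\rho$ to $f(x),f(y),f(z)$ and taking the supremum over $f$; (ii) consequently $d$ is a metric, since $d\ge d_0$ already separates points; (iii) each $f\in\F$ is $1$-Lipschitz with respect to $d$ and $\rho$, because $\rho(f(x),f(y))\le\hat d(x,y)\le d(x,y)$. What remains is (iv): $d$ is compatible. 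Since $d\ge d_0$, the $d$-topology is at least as fine as that of $X$, so it suffices to see that $id\colon X\to(X,d)$ is continuous, which by metrizability reduces to sequential continuity. If $x_n\to x$ in $X$ then $d_0(x_n,x)\to0$; and given $\varepsilon>0$, equicontinuity of $\F$ at $x$ furnishes a neighbourhood $U\ni x$ with $\diam_\rho f(U)\le\varepsilon$ for all $f\in\F$, so $\rho(f(y),f(x))\le\varepsilon$, hence $\hat d(y,x)\le\varepsilon$, for every $y\in U$. As $x_n\in U$ eventually, $\hat d(x_n,x)\le\varepsilon$ eventually, and therefore $d(x_n,x)\to0$.

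The only genuine point is step (iv), and inside it the single appeal to equicontinuity: that hypothesis is exactly what prevents the supremum pseudometric $\hat d$ from creating new open sets once it is adjoined to $d_0$, so that $d$ stays compatible. I also expect a little care in seeing that $\hat d$ — hence $d$ — is finite-valued: this is automatic when, e.g., $X$ is connected or each set $\{f(x):f\in\F\}$ is bounded, and in any event one may work with $[0,+\infty]$-valued metrics, whose \emph{galaxies} $\{y:\hat d(x,y)<+\infty\}$ are open, hence clopen, by the very neighbourhood $U$ exhibited above.
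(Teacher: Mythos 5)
The paper offers no proof of this proposition (it is dismissed as well known), so there is nothing to compare against; your construction $d=d_0+\hat d$ with $\hat d(x,y)=\sup_{f\in\F}\rho(f(x),f(y))$ is the standard one, and your steps (i)--(iv) — the pseudometric axioms, separation via $d\ge d_0$, the $1$-Lipschitz estimate $\rho(f(x),f(y))\le\hat d(x,y)\le d(x,y)$, and compatibility via equicontinuity — are all argued correctly. The implication $(2)\Rightarrow(1)$ is also fine.

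The genuine gap is exactly the one you flag at the end and then wave away: finiteness of $\hat d$. This is not a technicality that can always be repaired, because the implication $(1)\Rightarrow(2)$ is in fact false as literally stated. Take $X=\{0,1\}$ with the discrete topology, $Y=\R$ with the usual metric, and $\F=\{f_n : n\in\N\}$ where $f_n(0)=0$ and $f_n(1)=n$. Singletons are open, so $\F$ is equicontinuous (take $U=\{x\}$, so that each $f(U)$ is a singleton); but any compatible metric $d$ on $X$ has $d(0,1)<+\infty$, while $1$-Lipschitzness would force $n=\rho(f_n(0),f_n(1))\le d(0,1)$ for every $n$. Neither of your escape routes closes this: passing to $[0,+\infty]$-valued metrics changes the statement (a compatible \emph{metric} is finite-valued, and truncating $d$ to $\min\{d,1\}$ restores finiteness and the topology but destroys the $1$-Lipschitz property), and the clopenness of the galaxies only locates the obstruction, it does not remove it. What your argument honestly proves is the proposition under an additional hypothesis guaranteeing $\hat d<+\infty$ — for instance that $\F$ is pointwise bounded (then $\hat d(x,y)\le\sup_{f}\rho(f(x),y_0)+\sup_{f}\rho(y_0,f(y))<+\infty$ for a fixed $y_0\in Y$), or that $X$ is connected (so the unique galaxy is all of $X$). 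Some such hypothesis is presumably tacit in the paper's statement (which, note, also has a typo: the $d$ in condition (i) should be $\rho$, since equicontinuity as defined depends only on the topology of $X$); as written, your proof has a hole at precisely the point you identified, and that hole cannot be filled without strengthening the hypotheses.
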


The situation changes drastically when $Y=X$ and we want to have the same metric in both domain and codomain.

\begin{obs}
    For a metrizable space $X$ and a family $\F$ of functions from $X$ to $X$ the following are equivalent:
    \begin{enumerate}
        \item there exists a compatible metric on $X$ such that the semigroup $\langle\F\rangle$ generated by $\F$ is equicontinuous.
        \item there exists a compatible metric on $X$ with respect to which all functions from $\F$ are 1-Lipschitz;
    \end{enumerate}
\end{obs}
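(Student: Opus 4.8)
The plan is to get $(ii)\Rightarrow(i)$ essentially by inspection and to prove the converse by building the desired metric as the supremum of the pullbacks of a fixed metric along all members of the semigroup.

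For $(ii)\Rightarrow(i)$: suppose $d$ is a compatible metric making every $f\in\F$ $1$-Lipschitz. A composition of $1$-Lipschitz maps is again $1$-Lipschitz and $id_X$ is $1$-Lipschitz, so every element of $\langle\F\rangle$ is $1$-Lipschitz with respect to $d$; hence $\langle\F\rangle$ admits the modulus $\omega(t)=t$ and is in particular equicontinuous with respect to $d$. The same metric $d$ witnesses $(i)$.

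For $(i)\Rightarrow(ii)$: let $d$ be a compatible metric for which $\langle\F\rangle$ is equicontinuous. I would first pass to the bounded metric $\min(d,1)$, which is still compatible and, being $\omega\circ d$ for the nonzero modulus $\omega(t)=\min(t,1)\in MC$, still makes $\langle\F\rangle$ equicontinuous by Lemma~\ref{rownosc a modul}; so we may assume $\diam_d(X)\le 1$. Then set
\[
  \rho(x,y):=\sup\bigl\{\,d(g(x),g(y)) : g\in\langle\F\rangle\cup\{id_X\}\,\bigr\}.
\]
This supremum is at most $1$, so $\rho$ is real-valued; symmetry and the triangle inequality survive passage to the supremum, and $\rho(x,y)\ge d(x,y)>0$ for $x\ne y$ (take $g=id_X$), so $\rho$ is a metric. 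Since $\rho\ge d$, the map $id\colon(X,\rho)\to(X,d)$ is continuous; conversely, equicontinuity of $\langle\F\rangle\cup\{id_X\}$ at a point $x$ yields, for each $\varepsilon>0$, a $\delta>0$ such that $d(g(x),g(y))\le\varepsilon$ for all such $g$ whenever $d(x,y)<\delta$, i.e.\ $\rho(x,y)\le\varepsilon$, so $id\colon(X,d)\to(X,\rho)$ is continuous and $\rho$ is compatible. Finally, for $f\in\F$ and $x,y\in X$, every map $g\circ f$ with $g\in\langle\F\rangle\cup\{id_X\}$ again belongs to $\langle\F\rangle$, hence $d\bigl(g(f(x)),g(f(y))\bigr)\le\rho(x,y)$; taking the supremum over $g$ gives $\rho(f(x),f(y))\le\rho(x,y)$, so each $f\in\F$ is $1$-Lipschitz with respect to $\rho$.

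The metric axioms and the $1$-Lipschitz estimate are routine; the one step demanding care is the compatibility of $\rho$. Well-definedness forces the preliminary reduction to a bounded metric, and the proof that $\rho$ induces the original topology is precisely where one must invoke equicontinuity of the \emph{whole} semigroup $\langle\F\rangle$, not merely of $\F$ — this is the feature distinguishing this proposition from the preceding one, and also what makes the naive attempt using only $\F$ in place of $\langle\F\rangle$ in the definition of $\rho$ fail.
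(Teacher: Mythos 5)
Your proof is correct; the paper states this proposition without proof (calling it well known), but your supremum-of-pullbacks metric $\rho(x,y)=\sup_g d(g(x),g(y))$ is exactly the construction the paper uses for the implication (iii) $\Rightarrow$ (i) of Theorem \ref{glowne_twierdzenie}, specialized to all weights $b_n=1$, including the same preliminary passage to a bounded metric via Lemma \ref{rownosc a modul} and the same composition trick $g\circ f\in\langle\F\rangle$ for the $1$-Lipschitz estimate. Nothing is missing.
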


Both of the above results are well known. Now, the question is -- can we do anything to control Lipschitz constants of functions from an equicontinuous family and their compositions? The answer is yes and Theorem \ref{glowne_twierdzenie} will tell us how we can do it.

Before stating and proving the main theorem, we need two lemmas and additional notions. The lemmas are likely well known. However, for the sake of completeness, we give their elementary proofs.

\begin{lem}\label{ciag}
    Let $\{a_n\}_{n=1}^{+\infty} \subset \R$ be a sequence satisfying $a_n > 1$ for all $n>0$ and $a_n \to +\infty$. Then there exists a sequence $\{b_n\}_{n=1}^{+\infty} \subset \R$ such that:
    \begin{itemize}
        \item $1 < b_n \leq a_n$ for all $n>0$,
        \item $b_n \to +\infty$,
        \item $b_{n+m} \leq b_n b_m$ for all $n,m>0$, that is, the sequence $\{b_n\}_{n=1}^{+\infty}$ is \textbf{submultiplicative}.
    \end{itemize}
\end{lem}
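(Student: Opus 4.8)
The plan is to construct the submultiplicative sequence $\{b_n\}$ directly from $\{a_n\}$ by taking the ``submultiplicative hull'' from below. First I would set
\[
b_n := \min\Bigl\{\, a_{k_1}\cdots a_{k_j} \ :\ j \geq 1,\ k_1,\ldots,k_j \geq 1,\ k_1+\cdots+k_j = n \,\Bigr\},
\]
i.e.\ the minimum over all ways of writing $n$ as an ordered sum of positive integers of the corresponding product of $a$'s. (The minimum exists and is attained because there are only finitely many compositions of $n$, and each product is a real number $> 1$.) The submultiplicativity $b_{n+m} \leq b_n b_m$ is then immediate: concatenating a composition of $n$ realizing $b_n$ with one of $m$ realizing $b_m$ gives a composition of $n+m$, so $b_{n+m}$, being a minimum over \emph{all} compositions, is at most that particular product $b_n b_m$. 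Likewise $b_n \leq a_n$ by taking the trivial one-term composition, and $b_n > 1$ since every factor $a_{k_i} > 1$.

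The one genuinely nontrivial point — and the main obstacle — is showing $b_n \to +\infty$. This can fail for a careless definition: if some $a_k$ were close to $1$, repeatedly using index $k$ could keep products bounded. But here $a_n \to +\infty$, so fix a large target $M$; I want to show $b_n > M$ for all sufficiently large $n$. Since $a_k \to +\infty$, there is a threshold $K$ with $a_k > M$ for all $k \geq K$, and there is $c := \min\{a_1,\ldots,a_{K-1}\} > 1$ (a minimum of finitely many numbers each exceeding $1$). Now take any composition $n = k_1 + \cdots + k_j$. If some $k_i \geq K$ then that single factor already makes the product $> M$. Otherwise all parts are $< K$, hence $j > n/K$ parts are needed, and the product is at least $c^{\,j} \geq c^{\,n/K}$, which exceeds $M$ once $n$ is large enough (namely $n > K\log M/\log c$). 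Either way the product is $> M$, so the minimum $b_n > M$; hence $b_n \to +\infty$.

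Finally I would double-check the edge cases: all compositions of a fixed $n$ are finitely many so the minimum is well-defined; $b_1 = a_1$ since $1$ has only the trivial composition, so $1 < b_1 \leq a_1$; and the three bulleted properties have now all been verified. This completes the proof, and I do not expect any step beyond the $b_n\to+\infty$ argument to require more than a sentence.
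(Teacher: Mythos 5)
Your proof is correct, but it takes a genuinely different route from the paper. The paper fixes $c:=\inf\{a_n : n>0\}>1$ and builds an explicit nondecreasing step sequence $b_n=c^{\nu}$ for $i_\nu\le n<i_{\nu+1}$, where the breakpoints $i_\nu$ are chosen both late enough that $c^{\nu}\le a_m$ for all $m\ge i_\nu$ (giving $b_n\le a_n$) and at least twice the previous breakpoint; the doubling condition is exactly what makes an induction on $n+m$ yield submultiplicativity. You instead take the submultiplicative hull from below, $b_n=\min\{a_{k_1}\cdots a_{k_j} : k_1+\cdots+k_j=n\}$, which makes submultiplicativity and $1<b_n\le a_n$ immediate and concentrates all the work in the divergence $b_n\to+\infty$; your dichotomy there (either some part $k_i\ge K$ contributes a single factor exceeding $M$, or all parts are small, forcing at least $n/K$ factors each at least $c=\min\{a_1,\dots,a_{K-1}\}>1$) is sound. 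The trade-off: your construction is more conceptual and the hard step is cleanly isolated, while the paper's yields a concrete monotone sequence of the transparent form $c^{\nu}$ at the cost of a slightly fiddly induction. Both arguments are complete.
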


\begin{proof}
    The assumptions imply that $1 < c:=\inf\{a_n : n > 0\}$. We define inductively $i_0:=1$ and for $\nu\geq 1$ we put $i_\nu:=\max\{2\cdot i_{\nu-1},\min\{n > 0 : \ c^\nu \leq a_m \ \forall \ m \geq n \ \}\}$, an index starting from which all elements of the sequence are greater than or equal to $c^\nu$and which is at least twice as big as the previous one. For $n, \nu \in \N$ such that $i_\nu \leq n < i_{\nu+1}$ we put $b_n:=c^\nu$. Observe that such a sequence satisfies the first two conditions; what is left is to show it is submultiplicative. We will prove it by induction with respect to $n+m$.\\
    If $n+m=2$ then the condition is satisfied.\\
    Assume the condition holds for $n+m=N$ where $N \geq 2$. Consider $n$ and $m$ such that $n+m=N+1$. Then $1 < n$ or $1 < m$; without loss of generality $m \leq n$ (so $n > 1$).\\
    If $b_{N+1} = b_N$ then by induction hypothesis $b_{N+1} = b_N\leq b_m b_{n-1} \leq b_m b_n$.\\
    In the other case $b_N < b_{N+1}$ so $b_N = c^{\nu-1}$ and $b_{N+1}=c^\nu$ for some $\nu > 0$ and thus $N+1 = i_\nu$. Then $i_{\nu-1} \leq \frac{i_\nu}{2} = \frac{N+1}{2} = \frac{m+n}{2} \leq n \leq N < i_\nu$. Hence $i_{\nu-1} \leq n < i_\nu$ so $b_{N+1} = c\cdot c^{\nu-1} = cb_n \leq b_m b_n$.
\end{proof}

The next lemma is regarding the well-known \textit{tent map} (with parameter 2) $T:[0,1] \to [0,1]$ with $T(x):=1-|2x-1|$.

\begin{lem}\label{trojkat}
    For any $n>0$ the function $T^n$ is \textbf{not} 1-Lipschitz with respect to any compatible metric on $[0,1]$.
\end{lem}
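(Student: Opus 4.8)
The plan is to derive Lemma~\ref{trojkat} from the classical fact that \emph{every continuous, nonexpansive (i.e.\ $1$-Lipschitz) surjective self-map of a compact metric space is an isometry}. Granting this, suppose toward a contradiction that for some fixed $n>0$ there is a compatible metric $d$ on $[0,1]$ making $T^n$ $1$-Lipschitz. Then $([0,1],d)$ is compact, and $T^n$ is surjective: $T$ is continuous with $T(0)=0$ and $T(\tfrac12)=1$, so $T([0,1])$ is an interval containing $0$ and $1$, hence equals $[0,1]$, and inductively $T^n([0,1])=[0,1]$. By the cited fact $T^n$ is a $d$-isometry, in particular injective; but $T(1)=1-|2\cdot 1-1|=0=T(0)$, so $T^n(1)=T^{n-1}(0)=0=T^n(0)$ although $0\neq 1$ --- a contradiction. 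So the substance of the proof is the classical fact, which I would include for completeness.

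To prove it, let $f\colon X\to X$ be continuous, nonexpansive and surjective with $X$ compact. Since each iterate $f^k$ is nonexpansive, the family $\{f^k:k\ge 1\}$ is uniformly equicontinuous, so by the Arzel\`a--Ascoli theorem its closure $G$ in $C(X,X)$ with the uniform metric $d_\infty(g,h)=\sup_{x}d(g(x),h(x))$ is compact; moreover composition is jointly continuous on $C(X,X)$ because $X$ is compact, so $G$ is a compact topological semigroup. Every $g\in G$ is again nonexpansive (a uniform limit of nonexpansive maps is nonexpansive) and surjective (a uniform limit of surjections of a compact space onto itself is surjective, by extracting a convergent sequence of preimages). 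By the Ellis--Numakura lemma $G$ contains an idempotent $e$; since $e$ is surjective and $e\circ e=e$, for every $y=e(x)$ we get $e(y)=e(e(x))=e(x)=y$, so $e=\mathrm{id}_X$ and hence $\mathrm{id}_X\in G$.

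It remains to ``invert'' $f$ inside $G$. If $f^N=\mathrm{id}_X$ for some $N\ge 1$ then $f^{-1}=f^{N-1}$ is nonexpansive and $f$ is at once an isometry; otherwise $\mathrm{id}_X$ is a genuine limit point of $\{f^k:k\ge 1\}$, so there are $n_k\to\infty$ with $f^{n_k}\to\mathrm{id}_X$. Passing to a subsequence, $f^{n_k-1}\to g$ for some $g\in G$, and joint continuity of composition yields $f\circ g=\mathrm{id}_X$. Then for all $u,v\in X$ we have $d(u,v)=d\big(f(g(u)),f(g(v))\big)\le d(g(u),g(v))\le d(u,v)$, so $g$ is an isometry and all inequalities are equalities; since $g$ is surjective, every pair of points of $X$ has the form $(g(u),g(v))$, whence $d(f(x),f(y))=d(x,y)$ for all $x,y$, i.e.\ $f$ is an isometry. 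The main obstacle is assembling this classical fact --- the non-injectivity and surjectivity of $T^n$ are immediate, but proving that a nonexpansive surjection of a compact space is an isometry genuinely needs the semigroup compactness argument (Arzel\`a--Ascoli, joint continuity of composition, the Ellis--Numakura idempotent); a purely hands-on backward-orbit argument runs into the same need for a compactness extraction, so I would present the semigroup proof.
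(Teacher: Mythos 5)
Your proof is correct, but it takes a genuinely different and much heavier route than the paper's. The paper argues directly from the dynamics of the tent map: if $T^n$ were $1$-Lipschitz for a compatible $d$, so would be $T^{nm}=(T^n)^m$ for every $m>0$, and since $T^{nm}$ sends $\tfrac{1}{2^{nm}}$ to $1$ and $\tfrac{1}{2^{nm-1}}$ to $0$, one gets $d(0,1)\le d\bigl(\tfrac{1}{2^{nm}},\tfrac{1}{2^{nm-1}}\bigr)\to 0$ as $m\to+\infty$, a contradiction --- three lines and no machinery. You instead reduce to the classical fact that a continuous nonexpansive surjection of a compact metric space is an isometry, and then exploit that $T^n$ is surjective but not injective (since $T(0)=T(1)=0$); your proof of that classical fact via Arzel\`a--Ascoli, joint continuity of composition on $C(X,X)$ for compact $X$, and the Ellis--Numakura idempotent is also correct, including the observations that a surjective idempotent must be the identity and that a left inverse $g$ of $f$ can be extracted in $G$ (the only step you gloss over is that the exponents $n_k$ can indeed be taken to tend to infinity, which follows because a sequence drawn from a finite set of maps cannot converge to $\mathrm{id}_X$ without eventually equalling it). What your route buys is generality: it shows that no continuous, surjective, non-injective self-map of a compact metrizable space admits a compatible metric making it $1$-Lipschitz, with no reference to the tent map's expansion. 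What it costs is that the ``classical fact'' is itself a nontrivial theorem, whereas the paper's argument is elementary and self-contained; and, contrary to your closing remark, no compactness extraction is needed here --- the explicit forward orbits of $2^{-nm}$ and $2^{-nm+1}$ already do the job.
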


\begin{proof}
    Assume, on the contrary, that there are a natural number $n > 0$ and a metric $d$ on $[0,1]$ compatible with the natural topology such that $T^n$ is 1-Lipschitz. Then for any $m>0$ the function $T^{nm}$ is also 1-Lipschitz so $$d(0,1)=d(T^{nm}(\frac{1}{2^{nm}}),T^{nm}(\frac{1}{2^{nm-1}})) \leq d(\frac{1}{2^{nm}},\frac{1}{2^{nm-1}}).$$ On the other hand $d(\frac{1}{2^{nm}},\frac{1}{2^{nm-1}}) \to 0 \ (m \to +\infty)$ which leads to a contradiction.
\end{proof}


\begin{df}\label{skladanie}
    Let $(X,d)$ be a metric space. A family $\F \subset X^X$ is called \textit{finitely equicontinuous} iff $\F^n$ is equicontinuous for any $n>0$.
\end{df}

In particular, a uniformly equicontinuous family is finitely equicontinuous. Indeed, let $(X,d)$ be a metric space and $\F \subset X^X$ a uniformly equicontinuous family of functions. We will prove that $\F^n$ is uniformly equicontinuous for any $n>0$. We will do it by induction. The base case, $n=1$, is obvious. Suppose $\F^n$ is uniformly equicontinuous for some $n>0$. Fix $\varepsilon>0$. Then there are $\delta_1, \delta_2>0$ such that for any $x,y \in X$ we have
     \begin{align*}
        d(x,y) < \delta_1 & \implies d(f_1(x),f_1(y))<\varepsilon \\
        d(x,y) < \delta_2 & \implies d(f_2(x),f_2(y))<\delta_1
     \end{align*}
     for any $f_1 \in \F^n$ and $f_2 \in \F$. Since any $f \in \F^{n+1}$ is of the form $f_1 \circ f_2$ for some $f_1 \in \F^n$ and $f_2 \in \F$ we obtain uniform equicontinuity for $\F^{n+1}$ with $\delta = \delta_2$.
\begin{df}
    We will call a modulus of continuity $\omega$ a \textit{simple modulus of continuity} if there exist $\alpha \in \R_+$ and $\beta \in \R_+\cup\{+\infty\}$ such that $\omega(t)=\min\{\alpha t,\beta\} \ \forall \ t \in \R_+$. In particular, when $\beta > 0$ we have $\omega'(0)=\alpha$.
\end{df}

 To our best knowledge the following theorem is not known.

\begin{tw}\label{glowne_twierdzenie}
    For a sequence $\{\omega_n\}_{n=1}^{+\infty} \subset MC$ the following conditions are equivalent:
    \begin{enumerate}
        \item For any metrizable space $X$ and any family $\F \subset  X^X$ which is finitely equicontinuous with respect to some compatible metric on $X$, there exists a compatible metric $d$ on $X$ such that for any $n>0$ the family $\F^n$ admits the modulus $\omega_n$.
        
        \item There exists a compatible metric $d$ on $[0,1]$ such that for any $n>0$ the function $T^n$ admits the modulus $\omega_n$, where $T$ is the tent map.
        
        \item There exists a sequence $\{\phi_n\}_{n=1}^{+\infty}$ of simple moduli of continuity such that:
            \begin{itemize}
                \item for any $n>0$ we have $ 1 < \phi_n'(0)$ and $\phi_n'(0) \to +\infty \ (n \to + \infty)$,
                \item there exists a constant $c > 0$ such that for any $n>0$ we have $\sup{\phi_n}>c$,
                \item for $n>0$ and any $t \in \R_+$ we have $\phi_n(t) \leq \omega_n(t)$.
            \end{itemize}
       
        \item There exists a constant $c > 0$ such that for any real number $t> 0$ we have $c < \liminf\limits_{n \to +\infty}{\omega_n}(t)$ and $1 < \omega_n'(0)$ for any $n > 0$.
       
    \end{enumerate}
\end{tw}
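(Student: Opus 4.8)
We establish the cycle $(1)\Rightarrow(2)\Rightarrow(4)\Rightarrow(3)\Rightarrow(1)$. The implication $(1)\Rightarrow(2)$ is immediate: apply $(1)$ with $X=[0,1]$ and $\F=\{T\}$, since each $\F^n=\{T^n\}$ is a single continuous self-map of a compact metric space, hence equicontinuous, so $\F$ is finitely equicontinuous. For $(2)\Rightarrow(4)$, fix a compatible metric $d$ on $[0,1]$ with $T^n$ admitting $\omega_n$ for all $n$. If $\omega_n'(0)\le1$ for some $n$, then $\omega_n(t)\le\omega_n'(0)t\le t$, making $T^n$ $1$-Lipschitz for $d$ and contradicting Lemma~\ref{trojkat}; so $\omega_n'(0)>1$ throughout. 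Put $c:=\tfrac12\diam_d([0,1])>0$ and fix $t>0$. Using compatibility of $d$ and compactness of $[0,1]$, pick $\eta>0$ with $|x-y|<\eta\Rightarrow d(x,y)<t$, then $N$ with $2^{-N}<\eta$. For $n\ge N$, each of the $2^n$ monotone branches of $T^n$ is an interval of length $2^{-n}$ mapped affinely onto $[0,1]$; hence for arbitrary $p,q\in[0,1]$, choosing preimages $x,y$ of $p,q$ in one branch gives $|x-y|\le2^{-n}<\eta$, so $d(p,q)=d(T^nx,T^ny)\le\omega_n(d(x,y))\le\omega_n(t)$. Taking the supremum over $p,q$ gives $\omega_n(t)\ge2c$ for $n\ge N$, hence $\liminf_n\omega_n(t)\ge2c>c$ with $c$ independent of $t$.

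For $(4)\Rightarrow(3)$, note first: if $\omega\in MC$ and $0<s<\omega'(0)$, then choosing $t^*>0$ with $\omega(t^*)\ge st^*$ (possible as $\omega(t)/t\to\omega'(0)>s$) and invoking monotonicity of $\omega(t)/t$ and of $\omega$ yields $\min\{st,st^*\}\le\omega(t)$ for all $t\ge0$; i.e.\ $\omega$ dominates a simple modulus with derivative $s$ at $0$ and positive supremum. Now take $c>0$ from $(4)$ and set $t_k:=c\,2^{-k}$. For each $k\ge1$ fix $M_k$, with $1\le M_1<M_2<\cdots$, $M_k\to+\infty$, such that $\omega_n(t_k)>c$ for $n\ge M_k$. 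For $n\ge M_1$ let $k(n):=\max\{k:M_k\le n\}$; the same monotonicity argument gives $\min\{(c/t_{k(n)})t,c\}\le\omega_n(t)$ for all $t\ge0$, and we let $\phi_n$ be this simple modulus, with $\phi_n'(0)=2^{k(n)}\ge2$, tending to $+\infty$, and $\sup\phi_n=c$. For the finitely many $n<M_1$, the opening remark (with $s\in(1,\omega_n'(0))$) gives a simple $\phi_n\le\omega_n$ with $\phi_n'(0)>1$ and $\sup\phi_n>0$. Any positive number below $\min\{c,\min_{n<M_1}\sup\phi_n\}$ is a valid constant for $(3)$.

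The core is $(3)\Rightarrow(1)$. Write $\phi_n(t)=\min\{\alpha_n t,\beta_n\}$ for the moduli from $(3)$: $\alpha_n>1$, $\alpha_n\to+\infty$, $\beta_n>c_0$ for a fixed $c_0>0$, and $\phi_n\le\omega_n$; it suffices to make each $\F^n$ admit $\phi_n$. Let $\rho$ be a compatible metric with each $\F^n$ equicontinuous; replacing $\rho$ by $\min(\rho,\delta)$, $\delta:=c_0/2$ (still compatible, and by Lemma~\ref{rownosc a modul} each $\F^n$ stays equicontinuous), assume $\rho\le\delta$. By Lemma~\ref{ciag} applied to $(\alpha_n)$, get a submultiplicative $(b_n)$ with $1<b_n\le\alpha_n$, $b_n\to+\infty$; set $A_0:=1$, $A_n:=b_n$. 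Define
\[
 d(x,y):=\sup_{n\ge0}\ \frac1{A_n}\sup_{g\in\F^n}\rho\big(g(x),g(y)\big).
\]
Each term is $\le\delta/A_n\le\delta$, so $d$ is a finite pseudometric, and since the $n=0$ term is $\rho(x,y)$ we have $d\ge\rho$, so $d$ is a metric and every $\rho$-open set is $d$-open. Conversely, given $x,\varepsilon$, choose $N$ with $\delta/A_n<\varepsilon$ for $n>N$ (as $A_n\to+\infty$) and, by equicontinuity of the finite union $\bigcup_{n=0}^N\F^n$ at $x$, a $\rho$-neighbourhood $U$ of $x$ with $\rho(g(x),g(y))\le\varepsilon$ for all $g\in\bigcup_{n=0}^N\F^n$, $y\in U$; then $d(x,y)\le\varepsilon$ on $U$, so the identity $(X,\rho)\to(X,d)$ is continuous and $d$ is compatible. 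Finally, for $m\ge1$ and $h\in\F^m$: since $g\circ h\in\F^{n+m}$ for $g\in\F^n$, we get $\sup_{g\in\F^n}\rho(g(hx),g(hy))\le A_{n+m}d(x,y)$, hence $d(hx,hy)\le(\sup_{n\ge0}A_{n+m}/A_n)d(x,y)\le\alpha_m d(x,y)$ using $A_{n+m}\le A_nA_m\le A_n\alpha_m$ (submultiplicativity; $A_0=1$ for $n=0$). Since also $d(hx,hy)\le\delta<\beta_m$, we conclude $d(hx,hy)\le\min\{\alpha_m d(x,y),\beta_m\}=\phi_m(d(x,y))\le\omega_m(d(x,y))$, so $\F^m$ admits $\omega_m$.

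I expect the main obstacle to be this last construction: the formula for $d$ has to be at once a compatible metric — where finite equicontinuity together with $A_n\to+\infty$ is exactly what makes the tail of the supremum uniformly small near each point — and to behave under composition so that an element of $\F^m$ inflates distances by at most $\sup_n A_{n+m}/A_n$. Lemma~\ref{ciag} is the device that reduces the latter requirement to the single condition that $(A_n)$ be submultiplicative and dominated by $(\alpha_n)$. Everything else amounts to routine bookkeeping with (MC1)--(MC3).
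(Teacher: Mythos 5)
Your proposal is correct and follows essentially the same route as the paper: the same implication cycle $(1)\Rightarrow(2)\Rightarrow(4)\Rightarrow(3)\Rightarrow(1)$, the same use of Lemmas~\ref{ciag}, \ref{trojkat} and \ref{rownosc a modul}, and the same remetrization $d(x,y)=\sup_n A_n^{-1}\sup_{g\in\F^n}\rho(g(x),g(y))$ with a submultiplicative normalizing sequence. The only cosmetic differences are in $(2)\Rightarrow(4)$, where you argue via all $2^n$ affine branches of $T^n$ while the paper tracks the single orbit pair $2^{-n},2^{-(n-1)}$, and in $(4)\Rightarrow(3)$, where you use the scale $c2^{-k}$ in place of $c/m$.
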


\begin{proof}
    (i) $\Rightarrow$ (ii) : Simply take $X=[0,1]$ and $\F=\{T\}$. Then $\F$ is finitely equicontinuous (since $\F$ is finite and $T$ is continuous) and $\F^n=\{T^n\}$.
    
    (ii) $\Rightarrow$ (iv) : Let $d$ be a metric from (ii). Put $t_n:=d(\frac{1}{2^{n-1}},\frac{1}{2^n})$ for $n > 0$. Since the function $T^n$ admits the modulus $\omega_n$, the inequality $d(0,1) = d(T^n(\frac{1}{2^{n-1}}),T^n(\frac{1}{2^n})) \leq \omega_n(t_n)$ holds. Moreover, $t_n \to 0 \ (n \to +\infty)$. Fix $t > 0$. There exists a natural number $N$ such that for any $n > N$ we have $t_n < t$ which gives $d(0,1) \leq \omega_n(t)$ by the condition (MC2); hence $0 < \frac{d(0,1)}{2} < \liminf\limits_{n \to +\infty}{\omega_n}(t)$.\\
    \indent By Lemma \ref{trojkat} none of the functions $T^n$ is 1-Lipschitz with respect to $d$ and since $T^n$ is $\omega_n'(0)$-Lipschitz whenever $\omega_n'(0)$ is finite, we get $1 < \omega_n'(0)$ for any $n >0$.
    
    (iv) $\Rightarrow$ (iii) : Let $c$ be the constant from (iv). For any $m \geq 1$ there exists a constant $N_m \in \N$, such that for $n \geq N_m$ we have $c < \omega_n(\frac{c}{m})$; we may assume that $N_m < N_{m+1}$. For $N_m \leq n < N_{m+1}$ we set $\phi_n(t):=\min\{\omega_n(\frac{c}{m})\frac{m}{c} \cdot t,c\}$; it follows that $\phi_n'(0) = \omega_n(\frac{c}{m})\frac{m}{c} > c \cdot\frac{m}{c} = m \geq 1$. Therefore $\phi'_n(0) > m$ for all $n \geq N_m$ which implies that $\phi_n'(0) \to +\infty \ (n \to + \infty)$. Moreover, since the moduli of continuity are nondecreasing and satisfy (MC3), we have $\phi_n \leq \omega_n$ and $\sup \phi_n = c$ for $n \geq N_1$. \\
    Now we consider the case when $n < N_1$. As the derivatives at 0 are greater than 1 and there are finitely many $n< N_1$ we may find an appropriate lower bound with cut moduli of continuity. Namely, for fixed $n < N_1$ we may find $t_n\in\R_+$ such that $\frac{\omega_n(t_n)}{t_n} > 1$ and put $\phi_n(t):=\min\{\frac{\omega_n(t_n)}{t_n}\cdot t, \omega_n(t_n)\}$ which satisfies the conditions. Received sequence fulfills the conditions from (iii) with constant not necessarily $c$ but with $\tilde{c}:=\frac{1}{2}\min\limits_{n>0}\{\sup \phi_n\} > 0$.
    
    \begin{center}
        \begin{tikzpicture}[scale=1]
            \pgfmathsetmacro{\m}{2}
            \pgfmathsetmacro{\g}{10}
            \pgfmathsetmacro{\c}{4}
         
            \draw[->] (0,0) -- (1.1*\g,0) node[right] {};
            \draw[->] (0,0) -- (0,7) node[above] {};
        
            \pgfmathdeclarefunction{f}{1}{%
        		\pgfmathparse{1.2*ln(20*#1+1)}
        	}
            
            \draw[domain=0:\g,samples=100,variable=\x,black] plot ({\x},{f(\x)});
            \draw[domain=0:\g,samples=300,variable=\x,blue] plot ({\x},{min(f(\c/\m)/\c*\m*\x,\c)});
        
        	\draw[dotted] (\c/\m,0) -- (\c/\m,f(\c/\m);
            \draw[dotted] (0,\c) -- (\g,\c);
            \draw[dotted] (0,0) -- (\c/\m,f(\c/\m);
        
            \node[circle,fill,inner sep=1.5pt,label={below :$\frac{c}{m}$}] at (\c/\m,0) {};
            \node[circle,fill,inner sep=1.5pt,label={left :$c$}] at (0,\c) {};
            \node[circle,fill,inner sep=1.5pt,label={above :$\omega_n(\frac{c}{m})$}] at (\c/\m,f(\c/\m) {};
        
            \node[black,above left] at (0.4*\g,f(0.4*\g) {$\omega_n$};
            \node[blue,above left] at (0.4*\g,\c) {$\phi_n$};
            
        \end{tikzpicture}
    \end{center}
    
    (iii) $\Rightarrow$ (i) : From Lemma \ref{ciag} applied to the sequence $\{\phi_n'(0)\}_{n=1}^{+\infty}$ we may find another sequence $\{b_n\}_{n=1}^{+\infty}$ such that $1<b_n\leq \phi_n'(0)$, $b_n \rightarrow +\infty$ and $b_{n+m} \leq b_nb_m$ for $n,m > 0$; additionally we put $b_0:=1$.
    By Lemma \ref{rownosc a modul} we may find a compatible metric $d$ on $X$ such that $\diam_d(X) < c$ and the family $\F$ is finitely equicontinuous with respect to that metric (simply take $\omega(t) := \min\{\frac{c}{2},t\}$). We define a new metric:
    \begin{align*}
        \hat{d}(x,y):=\sup\Big\{ \frac{d(f(x),f(y))}{b_n} : n \geq 0, f \in \F^n\Big\}.
    \end{align*}
    One can easily see that it is indeed a metric. Observe that $\diam_{\hat{d}}(X) \leq c$. We have to show it is equivalent to $d$ and that it satisfies the condition given in (i).
    
    Note that $d \leq \hat{d}$ so it is sufficient to show that $\hat{d}(x_n,x) \to 0$ whenever $d(x_x,x) \to 0$. To this end, assume $d(x_n,x) \to 0$ and fix $\varepsilon > 0$. Since $b_n \to +\infty$, there exists $N \in \N$ such that for $m \geq N$ we have $\diam_d(X) \leq b_m\varepsilon$.
    The families $\F^n$ are equicontinuous for $0 < n < N$ with respect to $d$ and thus $\F \cup \F^2 \cup \ldots \cup \F^{N-1}$ is also equicontinuous. Hence we may find $\delta > 0$ such that if $d(x_n,x) < \delta$ then $d(f(x_n),f(x)) < \varepsilon$ for $f \in \F \cup \F^2 \cup \ldots \cup \F^{N-1}$. Since $b_m \geq 1$ for any $m$, $\hat{d}(x_n,x) \leq \varepsilon$ whenever $d(x_n,x) < \delta$. This ends the proof of the equivalence of the metrics.
    
    Let $f \in \F^m$ for some $m$. For any $x,y \in X$ we get
    \begin{align*}
        \hat{d}(f(x),f(y)) = & \ \sup\Big\{\frac{d((g\circ f)(x),(g\circ f)(y))}{b_n} : n \geq 0, g \in \F^n\Big\}\\
        = & \ b_m\sup\Big\{\frac{d((g\circ f)(x),(g\circ f)(y))}{b_m b_n} : n \geq 0, g \in \F^n\Big\}\\
        \leq & \ b_m\sup\Big\{\frac{d(g(x),g(y))}{b_{m+n}} : n \geq 0, g \in \F^{m+n}\Big\}\\
        = & \ b_m\sup\Big\{\frac{d(g(x),g(y))}{b_n} : n \geq m, g \in \F^{n}\Big\}\\
        \leq & \ b_m\hat{d}(x,y).
    \end{align*}
    
    Moreover $\hat{d}(f(x),f(y)) \leq c$ so
    $$\hat{d}(f(x),f(y)) \leq \min\{b_m\hat{d}(x,y), c\} \leq \min\{\phi_m'(0)\hat{d}(x,y),\sup \phi_m\} = \phi_m(\hat{d}(x,y))$$ 
    since $b_m \leq \phi_m'(0)$ and $c < \sup \phi_n$. Thus the function $f$ admits the modulus $\phi_m$ and so it admits the modulus $\omega_m$ which finishes the proof. \\
\end{proof}

\begin{rem}\label{zwarta}
    Note that if the space $X$ is compact and a family $\F \subset X^X$ is equicontinuous, then $\F$ is uniformly equicontinuous and hence finitely equicontinuous. Thus, under this extra condition of compactness of $X$, the condition (i) of the previous result could be accordingly modified.
\end{rem}

\begin{rem}
    Observe that the lower limit of a sequence of quasiconcave functions is quasiconcave. Thus the first condition (involving constant $c$) of item (iv) of Theorem \ref{glowne_twierdzenie} is equivalent to the statement that the lower limit of a given sequence of moduli of continuity is not a modulus of continuity.
\end{rem}

Setting $\omega_n(t) = t \log(n+2)$, we obtain from Theorem \ref{glowne_twierdzenie} the results announced in Introduction.

\begin{obs}
    It follows from the proof of Theorem \ref{glowne_twierdzenie} that for any family $\F$ of functions on a metrizable space $X$ the following conditions are equivalent:
    \begin{enumerate}
        \item there exists a compatible metric $d$ such that the family $\F$ is finitely equicontinuous;
        \item there exists a compatible metric $d$ such that the family $\F$ is uniformly equicontinuous;
        \item for $\F$ we have the rule specified in item (i) of Theorem \ref{glowne_twierdzenie} of admitting the moduli of continuity satisfying the condition (iv) therein with respect to some compatible metric.
    \end{enumerate}

     In particular, the above rules apply to finite families of continuous functions.
\end{obs}

\begin{proof}
    (ii) $\Rightarrow$ (i) : Obvious. \\
    (i) $\Rightarrow$ (iii) : Comes from the equivalence of conditions (i) and (iv) in Theorem \ref{glowne_twierdzenie}. \\
    (iii) $\Rightarrow$ (ii) : If family $\F$ admits a modulus of continuity then it is uniformly equicontinuous.
\end{proof}

One may wonder when a family of functions is finitely equicontinuous with respect to some compatible metric. As mentioned in Remark \ref{zwarta}, equicontinuity and finite equicontinuity are equivalent notions in the case of compact spaces. Hence, the problem is interesting only in the noncompact case. The example given below gives some insight into this issue.

In the following example, $\N_1$ denotes $\N\setminus\{0\}$.

\begin{prz}\label{przykladzik}
    For each $k \in \N$ we will construct a family $\F_k \subset X^X$ where $X=\N_1\times\R$ such that $\F_k^k$ is equicontinuous with respect to some metric but $\F_k^{k+1}$ is not equicontinuous with respect to any compatible metric. In particular, $X$ is locally compact, locally connected and separable.

    Fix $k\geq 2$. For $n,m\in \N_1, n<m$ we define functions:
   \begin{itemize}
        \item
            $f_n(a,b) = \left\{
                \begin{array}{ll}
                    (a,n\cdot b) & a=n, \\
                    (a,b) & a\neq n;
                \end{array}
            \right.$
        \item
            $f_{n,m}(a,b) = \left\{
                \begin{array}{lll}
                    (m,b\cdot{m^{-\frac{k-2}{2}}}) & a=n, \\
                    (n,b\cdot{m^{-\frac{k-2}{2}}}) & a=m, \\
                    (a,b) & a \neq n,m.
                \end{array}
            \right.$
    \end{itemize}

    We will call functions $f_n$ ``dilations (at position $n$)'' and $f_{n,m}$ ``contractions (with transposition)''. Note that $f_1=id_X$.
    
    Define $\F_k:=\{f_n : n \in \N_1\} \cup \{f_{n,m} : n,m \in \N_1, n<m\}$.

    Observe that $g_m:=f_{1,m} \circ f_m^{k-1} \circ f_{1,m}$ belongs to $\F_k^{k+1}$ for $m>1$. Moreover, $g_m(\{1\}\times(-\delta,\delta)) = \{1\}\times(-m\cdot\delta,m\cdot\delta)$, for any $\delta>0$. Let $A=\{1\}\times(-1,1)$ and $x=(1,0)$. Fix a compatible metric $\rho$ and put $\varepsilon=\diam_\rho(A)>0$. It is clear that for any neighbourhood $U$ of $x$ there exists $m>1$ such that $A \subset g_m(U)$, so $\diam_\rho(g_m(U)) > \varepsilon$ which shows that $\F_k^{k+1}$ is not equicontinuous.

    Now introduce the following metric on $X$:

    \begin{align*} 
        d((a,b),(c,d))=\left\{
            \begin{array}{ll}
                \min\{\frac{1}{a},|b-d|\} & a=c \\
                |a-c| + \min\{\frac{1}{a},|b|\} + \min\{\frac{1}{c},|d|\} & a\neq c
            \end{array}
        \right.
    \end{align*}

    It is equivalent to $d_e|_X$, where $d_e$ denotes the usual metric on the plane. A verification that $d$ is a metric is left to the reader. Notice that $d \leq d_e$ on each vertical line.

    We will show that $\F_k^k$ is equicontinuous with respect to the metric $d$.

    Let $(a,b)\in X, \varepsilon>0$ and $N=\max\{a,\lceil\frac{1}{\varepsilon}\rceil\}$; observe that for $n\geq N$ we have $\diam_d(\{n\}\times\R) \leq \varepsilon$. Lastly, set $U = \{a\}\times(b-\delta,b+\delta)$, where $\delta = \frac{\varepsilon}{2\cdot N^k}$. It is a neighbourhood of $(a,b)$.
    
    Now suppose $f \in \F_k^k$, so $f = h_k \circ \ldots \circ h_1$ with $h_i \in \F_k$, where $i=1,\ldots,k$. Denote by $\mathcal{M}$ the set of all indices $m \in \N$ for which $h_i = f_m$ or $h_i = f_{n,m}$ for some $i \in \{1,\ldots,k\}$ (and $n < m$ if applicable) such that $(h_i \circ \ldots \circ h_0)(U) \neq (h_{i-1} \circ \ldots \circ h_0)(U)$, where $h_0 = id_X$. If $\mathcal{M}=\varnothing$ set $M=0$, otherwise set $M=\max\mathcal{M}$. In other words, $M$ is the highest index appearing among those functions $h_i$ which ``affect'' the set $U$.
    
    \begin{enumerate}
        \item If $f(U) \subset \{n\}\times\R$ for some $n \geq N$ we have $\diam_d(f(U)) \leq \varepsilon$.
        \item If $f(U) \subset \{n\}\times\R$ for some $n<N$ and $M > N$, we know that there are $i,j\in\{1,\ldots,k\}, \ i\neq j$, such that $h_i = f_{n_i,M}, h_j=f_{n_j,M}$ for some $n_i,n_j \in \N_1$ (from the definition of $\mathcal{M}$) -- i.e. there are at least 2 contractions among $h_i$'s (as $a < M$ and $n < M$). Since all dilations are at most at position $M$ we get $\diam_{d_e}(f(U)) \leq 2\cdot \delta \cdot M^{k-2} \cdot M^{-(k-2)} = 2 \cdot \delta \leq \varepsilon$, so $\diam_d(f(U)) \leq \varepsilon$.
        \item If $f(U) \subset \{n\}\times\R$ for some $n<N$ and $M \leq N$ we know that all dilations take place at most at position $N$, so $\diam_{d_e}(f(U)) \leq 2 \cdot \delta \cdot N^k = \varepsilon$, so $\diam_d(f(U)) \leq \varepsilon$.
    \end{enumerate}
    
    This exhausts all the possibilities and thus proves that $\F_k^k$ is equicontinuous with respect to $d$.
    
    Obviously, we may take $\F_1:=\F_2^2$ and $\F_0:=\F_2^3$.
\end{prz}

    As the example shows, finite equicontinuity of a family $\F$ with respect to some metric cannot be deduced from equicontinuity of $\F^n$ in some (other) metric, even if $n$ is high. We leave the following problem open:

\begin{problem}
    Let $X$ be a metrizable space and $\F \subset X^X$. Suppose $\F^n$ is equicontinuous with respect to a (compatible) metric $d_n$ for any $n \in \N_1$. Does there exist a (compatible) metric $d$ such that $\F$ is finitely equicontinuous?
\end{problem}


\end{document}